\newtheorem{theorem}{Theorem}[section]
\newtheorem{corollary}[theorem]{Corollary}
\theoremstyle{remark}
\numberwithin{equation}{section}
\title[Proof of a $q$-supercongruence conjectured by Guo and Schlosser]
{Proof of a $q$-supercongruence conjectured by Guo and Schlosser}
\author{Long Li}
\address{School of Mathematics and Statistics, Huaiyin Normal University,
Huai'an 223300, Jiangsu, People's Republic of China}
\email{lli@hytc.edu.cn}
\thanks{The first author was partially supported by the Natural Science Foundation of the Jiangsu Higher Education Institutions of China (grant 19KJB110006).}
\author{Su-Dan Wang$^*$}
\address{College of Mathematics Science, Inner Mongolia Normal University, Huhhot 010022, Inner Mongolia}
\email{sdwang@imnu.edu.cn}
\thanks{*Corresponding author.}
\subjclass[2010]{Primary 11B65; Secondary 11A07, 11F33}
\keywords{cyclotomic polynomial, $q$-binomial coefficients, supercongruences, identities}
\begin{document}

\date{\today}

\begin{abstract}
In this paper, we confirm the following conjecture of Guo and Schlosser: for any odd integer $n>1$ and $M=(n+1)/2$ or $n-1$,
$$
\sum_{k=0}^{M}[4k-1]_{q^2}[4k-1]^2\frac{(q^{-2};q^4)_k^4}{(q^4;q^4)_k^4}q^{4k}\equiv (2q+2q^{-1}-1)[n]_{q^2}^4\pmod{[n]_{q^2}^4\Phi_n(q^2)},
$$
where $[n]=[n]_q=(1-q^n)/(1-q),(a;q)_0=1,(a;q)_k=(1-a)(1-aq)\cdots(1-aq^{k-1})$ for $k\geq 1$ and $\Phi_n(q)$
denotes the $n$-th cyclotomic polynomial.
\end{abstract}

\maketitle

\section{Introduction}

\noindent In 1997, Van Hamme \cite{Hamme} observed that $13$ supercongruences on truncated forms of Ramanujan's and Ramanujan-like formulas for $1/\pi$.
In particular, the following supercongruence of Van Hamme \cite[(B.2)]{Hamme},
\begin{equation*}
\sum_{k=0}^{(p-1)/2}\frac{4k+1}{(-64)^k}{2k\choose k}^3\equiv p(-1)^{\frac{p-1}{2}}\pmod {p^3}
\end{equation*}
was first proved by Mortenson \cite{Mortenson} using a $_6F_5$ transformation and a technical evaluation of a quotient of Gamma functions,
where $p$ is an odd prime. Recently, $q$-analogues of congruences and supercongruences have caught the interests of many authors
(see, for example, \cite{Guillera,Guo-m3,Guo-a2,Guo-jmaa,Guo-Zudlin,Guo-result,GS3,GW,Liu-Petrov,Liu-Huang,NP,
Tauraso,WY,Zudilin}).
In \cite[Conjecture 4.3]{Guo-ITSF}, Guo conjectured: for any prime $p>3$ and positive integer $r$,
\begin{align}
\sum_{k=0}^{(p^r-1)/2}\frac{(4k+1)^3}{256^k}{2k\choose k}^4 &\equiv -p^r \pmod {p^{r+3}}, \label{eq1.11}\\
\sum_{k=0}^{p^r-1}\frac{(4k+1)^3}{256^k}{2k\choose k}^4     &\equiv -p^r \pmod {p^{r+3}}. \label{eq1.12}
\end{align}
Later, Guo \cite[Theorem 1.1]{Guo-result} proved \eqref{eq1.11} and \eqref{eq1.12} by establishing the following complete $q$-analogues of them:
for odd integer $n>1$, modulo $[n]_{q^2}\Phi_n(q^2)^3$,
\begin{align*}
\sum_{k=0}^{(n-1)/2}[4k+1]_{q^2}[4k+1]^2\frac{(q^2;q^4)_k^4}{(q^4;q^4)_k^4}q^{-4k}
&\equiv -[n]_{q^2}\frac{2q^{2-n}}{1+q^2}-[n]_{q^2}^3\frac{(n^2-1)(1-q^2)^2q^{2-n}}{12(1+q^2)},  
\\
\sum_{k=0}^{n-1}[4k+1]_{q^2}[4k+1]^2\frac{(q^2;q^4)_k^4}{(q^4;q^4)_k^4}q^{-4k}
&\equiv
-[n]_{q^2}\frac{2q^{2-n}}{1+q^2}-[n]_{q^2}^3\frac{(n^2-1)(1-q^2)^2q^{2-n}}{12(1+q^2)}.  
\end{align*}
Here and throughout the paper, $(a;q)=(1-a)(1-aq)\cdots(1-aq^{n-1})$ denotes the $q$-\emph{shifted factorial},
$[n]=[n]_q=1+q+\cdots+q^{n-1}$ stands for the \emph{$q$-integer}, and $\Phi_n(q)$ is
the $n$-th \emph{cyclotomic polynomial} in $q$, i.e.,
$$\Phi_n(q)=\prod_{\substack{1\leq k\leq n\\\gcd(n,k)=1}}^n(q-\zeta^k)$$
with $\zeta$ being an $n$-th primitive root of unity.

Guo \cite[Theorem 1.2]{Guo-result} also proved that, for any odd integer $n>1$,
\begin{align}
\sum_{k=0}^{(n+1)/2}[4k-1]_{q^2}[4k-1]^2\frac{(q^{-2};q^4)_k^4}{(q^4;q^4)_k^4}q^{4k} &\equiv 0\pmod{[n]_{q^2}\Phi_n(q^2)^3},  \label{eq1.17} \\
\sum_{k=0}^{n-1}[4k-1]_{q^2}[4k-1]^2\frac{(q^{-2};q^4)_k^4}{(q^4;q^4)_k^4}q^{4k}     &\equiv 0\pmod{[n]_{q^2}\Phi_n(q^2)^3},  \label{eq1.18}
\end{align}
thus confirming the $m=3$ case \cite[Conjecture 5.2]{GL}.

The aim of this paper is to prove the following refinements of \eqref{eq1.17} and \eqref{eq1.18}, which were originally conjectured by
Guo and Schlosser \cite[Conjecture 3]{GS3} (The modulus $[n]_{q^2}^4$ case was first formulated by Guo \cite[Conjecture 6.3]{Guo-result}).
\begin{theorem}\label{thm3} Let $n>1$ be an odd integer. Then
\begin{align}
\sum_{k=0}^{(n+1)/2}[4k-1]_{q^2}[4k-1]^2\frac{(q^{-2};q^4)_k^4}{(q^4;q^4)_k^4}q^{4k} &\equiv (2q+2q^{-1}-1)[n]_{q^2}^4\pmod{[n]_{q^2}^4\Phi_n(q^2)},
\label{eq1.3} \\
\sum_{k=0}^{n-1}[4k-1]_{q^2}[4k-1]^2\frac{(q^{-2};q^4)_k^4}{(q^4;q^4)_k^4}q^{4k}     &\equiv (2q+2q^{-1}-1)[n]_{q^2}^4\pmod{[n]_{q^2}^4\Phi_n(q^2)}. \label{eq1.4}
\end{align}
\end{theorem}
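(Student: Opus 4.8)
The plan is to establish \eqref{eq1.3} and \eqref{eq1.4} simultaneously through the creative microscoping method of Guo and Zudilin. First I would introduce an auxiliary parameter $a$ by deforming two of the four numerator factors $(q^{-2};q^4)_k$, setting
\[
S_M(a)=\sum_{k=0}^{M}[4k-1]_{q^2}[4k-1]^2\,\frac{(q^{-2};q^4)_k^2\,(aq^{-2};q^4)_k\,(q^{-2}/a;q^4)_k}{(q^4;q^4)_k^2\,(aq^4;q^4)_k\,(q^4/a;q^4)_k}\,q^{4k},
\]
so that $S_M(1)$ is the left-hand side of both congruences. This deformation turns the summand into a very-well-poised basic hypergeometric series in the base $q^4$ with well-poised parameter $q^{-2}$, and its two ``microscope'' specialisations $a=q^{2n}$ and $a=q^{-2n}$ are what will govern the $\Phi_n(q^2)$-part of the modulus. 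The advantage of keeping $a$ generic is that $S_M(a)$ is now a genuinely summable object, amenable to a $q$-identity, whereas the undeformed cubic-weight sum is not.

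Second, I would prove a parametric congruence of the shape $S_M(a)\equiv R_M(a)\pmod{(1-aq^{2n})(a-q^{2n})}$ for $M=(n+1)/2$ and $M=n-1$, where $R_M(a)$ is an explicit rational function. The mechanism is that after reducing modulo $\Phi_n(q^2)$ and specialising $a=q^{\mp 2n}$, the factor $(aq^{-2};q^4)_k$ (respectively $(q^{-2}/a;q^4)_k$) becomes divisible by $\Phi_n(q^2)$ as soon as $k\ge(n+3)/2$, which truncates the sum to its lower terms; these surviving terms should then be collapsed by Watson's $_8\phi_7$ transformation followed by a balanced terminating $_4\phi_3$ (i.e.\ $q$-Dixon) evaluation, yielding the closed form. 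Since $(1-aq^{2n})(a-q^{2n})$ degenerates at $a=1$ to $(1-q^{2n})^2$, which is divisible by $\Phi_n(q^2)^2$, this is the engine that manufactures the cyclotomic powers; the intrinsic divisibility of $(q^{-2};q^4)_k^2$ in the undeformed pair of factors supplies the remaining powers needed to reach $\Phi_n(q^2)^5$.

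Third --- and this is where the refinement beyond the known evaluation \eqref{eq1.17}--\eqref{eq1.18} lives --- I would take $a\to 1$ in $S_M(a)\equiv R_M(a)$ by a Taylor (L'Hôpital-type) expansion. Because the target $(2q+2q^{-1}-1)[n]_{q^2}^4$ sits exactly one power of $\Phi_n(q^2)$ below the modulus $[n]_{q^2}^4\Phi_n(q^2)$, I must carry this expansion to high enough order and read off the constant term from the resulting $q$-logarithmic-derivative (i.e.\ $q$-harmonic) sums of the closed form $R_M(a)$. The contribution of the proper divisors $d\mid n$, $1<d<n$ --- that is, the full $[n]_{q^2}^4$ part of the modulus, against which the right-hand side is $\equiv 0$ --- would be handled along the same lines, using the fourfold vanishing coming from $(q^{-2};q^4)_k^4$ to force divisibility by $\Phi_d(q^2)^4$, and assembling everything by the coprimality of distinct cyclotomic factors.

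The main obstacle I anticipate is the exact extraction of the constant $2q+2q^{-1}-1$ in the third step. Its odd powers of $q$ cannot come from the base-$q^2$ data $[4k-1]_{q^2}$, $(q^{-2};q^4)_k$, $q^{4k}$; they can only arise from the base-$q$ weight $[4k-1]^2$, which ``sees'' only half of the splitting $\Phi_n(q^2)=\Phi_n(q)\,\Phi_{2n}(q)$ (valid since $n$ is odd). Consequently the bookkeeping must keep $\Phi_n(q)$ and $\Phi_{2n}(q)$ separate when reducing the $q$-harmonic sums, and one must verify that the two specialisations $a=q^{2n}$ and $a=q^{-2n}$ recombine to produce precisely $2q+2q^{-1}-1$ rather than some other symmetric function of $q$. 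Establishing the parametric congruence of Step~2 and selecting the right companion transformation in Step~3 are the conceptual keys that make this delicate, base-sensitive computation tractable.
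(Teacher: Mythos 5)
Your microscoping plan is a genuinely different route from the paper's, but as it stands it has a gap that the method cannot close: the modulus of your parametric congruence is too weak for the refinement being claimed. A congruence $S_M(a)\equiv R_M(a)\pmod{(1-aq^{2n})(a-q^{2n})}$ says only that $S_M(a)-R_M(a)=(1-aq^{2n})(a-q^{2n})\,C(a)$ with an \emph{unknown} cofactor $C(a)$; at $a=1$ this certifies divisibility of $S_M(1)-R_M(1)$ by $(1-q^{2n})^2=(1-q^2)^2[n]_{q^2}^2$, i.e.\ by two powers of $\Phi_n(q^2)$, whereas \eqref{eq1.3}--\eqref{eq1.4} demand control modulo $\Phi_n(q^2)^5$ when $n$ is prime. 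Your Step 3 cannot repair this: Taylor-expanding at $a=1$ ``to higher order'' gains nothing, because $C(1)$ and its derivatives are not known and contaminate every coefficient from order two on. Even the stronger parametric statement that the paper itself proves in its Section 3 (Theorem \ref{thm5}: the deformed sum is $\equiv 0$ modulo $[n]_{q^2}^2(1-aq^{2n})(a-q^{2n})$) only yields $S_M(1)\equiv 0\pmod{[n]_{q^2}^4}$ at $a=1$ --- one power of $\Phi_n(q^2)$, and the entire constant $2q+2q^{-1}-1$, short of Theorem \ref{thm3}. The difficulty you anticipate about the odd powers of $q$ in $2q+2q^{-1}-1$ is real, but it is a symptom: no recombination of the specialisations $a=q^{\pm 2n}$ can produce an exact nonzero right-hand side sitting one cyclotomic power below the modulus out of a congruence whose $a$-dependence is only quadratic.

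The missing idea --- and your one false premise --- is that the undeformed truncated sum \emph{is} exactly summable, contrary to your remark that only the $a$-deformation is. The paper proves by a short induction on $n$ (Theorem \ref{thm2}) the closed form
\begin{align*}
&\sum_{k=0}^{n-1}[4k-1]_{q^2}[4k-1]^2\frac{(q^{-2};q^4)_k^4}{(q^4;q^4)_k^4}q^{4k}\\
&\quad=(q^{2n}+1)^4[n]_{q^2}^4\frac{(q^{-2};q^4)_n^4}{(q^4;q^4)_n^4}
\biggl(2\cdot\frac{q^5+q^{4n+1}(q^{4n-2}-q^2-1)}{(q^2-1)^2}-q^{4n}\biggr),
\end{align*}
in which the factor $[n]_{q^2}^4$ is explicit, so the whole $[n]_{q^2}^4$ part of the modulus is absorbed at a stroke and no divisor-by-divisor $\Phi_d(q^2)^4$ analysis is needed. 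Both congruences then reduce to evaluating the cofactor modulo the single remaining factor $\Phi_n(q^2)$: for $M=(n+1)/2$ one substitutes $n\mapsto(n+3)/2$ and uses $(q^{-2};q^4)_{(n+1)/2}/(q^4;q^4)_{(n+1)/2}\equiv(-1)^{(n+1)/2}q^{(n-1)^2/2-2}\pmod{\Phi_n(q^2)}$; for $M=n-1$ one rewrites the right-hand side via central $q$-binomial coefficients and uses ${2n\brack n}_{q^2}\equiv 2$ and $(-q^2;q^2)_n\equiv 2\pmod{\Phi_n(q^2)}$. The constant $2q+2q^{-1}-1$ then drops out of the explicit cofactor directly, with no $q$-harmonic sums, no $_8\phi_7$ transformation, and none of the $\Phi_n(q)$ versus $\Phi_{2n}(q)$ bookkeeping you were bracing for. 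If you want to salvage your plan, the realistic fix is to prove an exact $a$-parametric \emph{identity} (as in the paper's Theorem \ref{thm4}) rather than a parametric congruence, and then set $a=1$ --- at which point the inductive proof of the identity is doing all the work anyway.
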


Let  $n=p^r$ be an odd prime power, and take $q\to 1$ in \eqref{eq1.3} and \eqref{eq1.4}. Noticing that
$$
\lim_{q\to 1}\frac{(q^{-2},q^4)_k}{(q^4;q^4)_k}=\frac{-1}{4^k(2k-1)}{2k\choose k},
$$
we immediately obtain the following conclusion, which was observed by Guo \cite[Conjecture 6.4]{Guo-m3}.
\begin{corollary} Let $p$ be an odd prime and $r$ a positive integer. Then
$$
\sum_{k=0}^{(p^r+1)/2}\frac{(4k-1)^3}{256^k(2k-1)^4}{2k\choose k}^4\equiv 3p^{4r} \pmod{p^{4r+1}},
$$
$$
\sum_{k=0}^{p^r-1}\frac{(4k-1)^3}{256^k(2k-1)^4}{2k\choose k}^4\equiv 3p^{4r} \pmod{p^{4r+1}}.
$$
\end{corollary}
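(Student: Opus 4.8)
The plan is to apply the \emph{creative microscoping} method of Guo and Zudilin, which replaces the target congruence by a stronger identity in an auxiliary parameter $a$. First I would deform two of the four factors $(q^{-2};q^4)_k$ and set
$$
S_M(a)=\sum_{k=0}^{M}[4k-1]_{q^2}[4k-1]^2\,\frac{(aq^{-2};q^4)_k\,(q^{-2}/a;q^4)_k\,(q^{-2};q^4)_k^2}{(aq^{4};q^4)_k\,(q^{4}/a;q^4)_k\,(q^{4};q^4)_k^2}\,q^{4k},
$$
so that $S_M(1)$ is the left-hand side of \eqref{eq1.3} and \eqref{eq1.4}. Before anything else I would dispose of the difference between the two cases: for $(n+3)/2\le k\le n-1$ the factor $(q^{-2};q^4)_k$ contains, for every divisor $d>1$ of $n$, at least one term $1-q^{4i-2}$ divisible by $\Phi_d(q^2)$, so the fourth power $(q^{-2};q^4)_k^4$ is divisible by $[n]_{q^2}^4\Phi_n(q^2)$ while the denominator $(q^4;q^4)_k^4$ supplies no offsetting cyclotomic factor for $k<n$; hence each tail summand vanishes modulo $[n]_{q^2}^4\Phi_n(q^2)$, and it suffices to treat $M=(n+1)/2$.

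The analytic heart is the evaluation of $S_{(n+1)/2}(a)$ at the two distinguished values $a=q^{2n}$ and $a=q^{-2n}$. At either value one of the deformed numerators becomes $(q^{-2-2n};q^4)_k$, which vanishes for $k>(n+1)/2$ and so forces the series to terminate exactly at $k=(n+1)/2$; moreover the summand is then very-well-poised in base $q^{4}$, with $[4k-1]_{q^2}$ playing the role of the well-poised term. I would evaluate the resulting terminating series by a $q$-Dougall / Jackson ${}_8\phi_7$ summation, obtaining explicit closed forms at $a=q^{\pm2n}$. Since a Laurent polynomial in $a$ is determined modulo $(1-aq^{2n})(a-q^{2n})$ by its values at the two roots $a=q^{\pm2n}$, these evaluations yield a parametric congruence for $S_{(n+1)/2}(a)$ modulo $(1-aq^{2n})(a-q^{2n})$. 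In parallel I would pair each index $k$ with its complementary index and use $q^{2n}\equiv1\pmod{\Phi_n(q^2)}$ to establish a companion congruence modulo a power of $\Phi_n(q^2)$. As $(1-aq^{2n})(a-q^{2n})$ and $\Phi_n(q^2)$ are coprime in $\mathbb{Z}[a^{\pm1},q^{\pm1}]$, the two statements combine into a single parametric congruence.

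Specializing $a\to1$ turns $(1-aq^{2n})(a-q^{2n})$ into $(1-q^{2n})^2=(1-q^2)^2[n]_{q^2}^2$, so a bare one-parameter congruence would deliver only divisibility by $[n]_{q^2}^2\Phi_n(q^2)$ — exactly the strength of \eqref{eq1.17}. The extra two powers of $[n]_{q^2}$ needed for \eqref{eq1.3} and \eqref{eq1.4}, together with the nonzero right-hand side, must come from the squared $q$-integer $[4k-1]^2$ in the summand: it is responsible for additional vanishing of $S_{(n+1)/2}(a)$ both at $a=q^{\pm2n}$ and modulo $\Phi_n(q^2)$, which upgrades the parametric modulus so that the $a\to1$ limit retains the full $[n]_{q^2}^4\Phi_n(q^2)$. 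The residue is then read off as the value at $a=1$ of the ${}_8\phi_7$ closed form, and one checks that it equals $(2q+2q^{-1}-1)[n]_{q^2}^4$, consistent with the classical constant $3$ obtained after $q\to1$.

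The main obstacle is precisely this last upgrade. One-parameter microscoping alone is two powers of $[n]_{q^2}$ short of the claimed modulus and produces only a vanishing residue, so the crux is to make the enhancement coming from $[4k-1]^2$ quantitative: one must show that the higher-order $a$-expansion of the parametric identity contributes a clean multiple of $[n]_{q^2}^4$ with no residual obstruction modulo $\Phi_n(q^2)$, and that the accompanying expansion of the ${}_8\phi_7$ evaluation assembles exactly into $(2q+2q^{-1}-1)[n]_{q^2}^4$. Tracking these cross-terms, and verifying that the factor $\Phi_n(q^2)$ survives intact, is where the real work of the proof lies.
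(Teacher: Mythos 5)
Your proposal does not match the paper's route, and it contains genuine gaps that it itself half-admits. First, note what the paper actually does for this corollary: it is an immediate specialization of Theorem \ref{thm3} — set $n=p^r$, let $q\to 1$, use $\lim_{q\to 1}(q^{-2};q^4)_k/(q^4;q^4)_k=\frac{-1}{4^k(2k-1)}\binom{2k}{k}$, $[p^r]_{q^2}\to p^r$, $\Phi_{p^r}(1)=p$, and $2q+2q^{-1}-1\to 3$. The real content is Theorem \ref{thm3}, and the paper proves it not by creative microscoping but by an exact closed-form evaluation of the partial sums (Theorem \ref{thm2}), established by induction on $n$, from which both truncations $M=(n+1)/2$ and $M=n-1$ are read off modulo $[n]_{q^2}^4\Phi_n(q^2)$ using the known congruences for $(q^{-2};q^4)_{(n+1)/2}/(q^4;q^4)_{(n+1)/2}$, for ${2n\brack n}_{q^2}$, and for $(-q^2;q^2)_n$.

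Your microscoping plan fails at exactly the point you flag as ``the crux,'' and that failure is not a technicality. One-parameter microscoping with the deformation $S_M(a)$ delivers precisely the paper's Theorem \ref{thm5}: vanishing modulo $[n]_{q^2}^2(1-aq^{2n})(a-q^{2n})$, hence at $a=1$ only the strength of \eqref{eq1.17}--\eqref{eq1.18} with \emph{zero} residue; it cannot by itself determine the nonzero residue $(2q+2q^{-1}-1)[n]_{q^2}^4$ modulo $[n]_{q^2}^4\Phi_n(q^2)$, and your proposal gives no mechanism to extract it — the hand-waved ``higher-order $a$-expansion'' is exactly the missing idea, supplied in the paper by the explicit identity \eqref{eq:thm2.1}. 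Moreover, two concrete steps in your plan are wrong as stated. (i) The terminating series at $a=q^{\pm 2n}$ is \emph{not} of $q$-Dougall/Jackson ${}_8\phi_7$ type: the factor $[4k-1]_{q^2}[4k-1]^2$ is a cubic analogue of $4k-1$ mixing bases $q$ and $q^2$, whereas a very-well-poised series in base $q^4$ admits only the single linear very-well-poised factor; this mixed cube is precisely why these $(4k-1)^3$ sums resisted the standard summation machinery and why the paper resorts to induction. (ii) Your tail-disposal argument is one power short: for $(n+3)/2\le k\le n-1$ the numerator $(q^{-2};q^4)_k^4$ contributes $(1-q^{2n})^4$, i.e.\ $\Phi_n(q^2)$-adic valuation exactly $4$ in general, while the modulus $[n]_{q^2}^4\Phi_n(q^2)$ requires valuation $5$; so the tail summands do not vanish termwise modulo $[n]_{q^2}^4\Phi_n(q^2)$, and one cannot reduce $M=n-1$ to $M=(n+1)/2$ this way (the paper instead handles the two truncations separately, via \eqref{eq3.4} and via the restatement \eqref{eq3.1} with central $q$-binomial coefficients). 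Finally, even granting the theorem, your write-up omits the actual specialization step ($n=p^r$, $q\to1$, $\Phi_{p^r}(1)=p$) that constitutes the paper's entire proof of the corollary.
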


The remainder of this paper is organized as follows. In the next section, we show the proof of Theorem \ref{thm3}. Section 3 contains a parameter-generalization of Theorem \ref{thm2} and a proof of \cite[Theorem 4.3]{Guo-result} modulo $[n]_{q^2}^2(1-aq^{2n})(a-q^{2n})$.

\section{Proof of Theorem \ref{thm3}}

We first prove the following identity, which plays an important role in our proof of Theorem \ref{thm3}.

\begin{theorem}\label{thm2} Let $n$ be a positive integer. Then
 \begin{equation}\label{eq:thm2.1}
 \begin{split}
 &\sum_{k=0}^{n-1}[4k-1]_{q^2}[4k-1]^2\frac{(q^{-2};q^4)_k^4}{(q^4;q^4)_k^4}q^{4k}\\
  &\quad=(q^{2n}+1)^4[n]_{q^2}^4\frac{(q^{-2};q^4)^4_{n}}{(q^4;q^4)_{n}^4}
  \biggl(2\cdot\frac{q^5+q^{4n+1}(q^{4n-2}-q^2-1)}{(q^2-1)^2}-q^{4n}\biggr).
  \end{split}
  \end{equation}
\end{theorem}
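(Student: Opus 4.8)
The plan is to prove the identity \eqref{eq:thm2.1} by induction on $n$, which amounts to recognising its right-hand side as the antidifference of the summand in the sense of ($q$-)Gosper and creative telescoping. Write $a_k=[4k-1]_{q^2}[4k-1]^2\frac{(q^{-2};q^4)_k^4}{(q^4;q^4)_k^4}q^{4k}$ for the $k$-th summand and let $S(n)$ denote the right-hand side of \eqref{eq:thm2.1}. Since the partial sums $L(n)=\sum_{k=0}^{n-1}a_k$ obey $L(n+1)-L(n)=a_n$ and $L(1)=a_0$, it suffices to check that $S(n)$ satisfies the same first-order recurrence $S(n+1)-S(n)=a_n$ together with the initial value $S(1)=a_0$.

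The base case is immediate: at $n=1$ the numerator $q^5+q^{4n+1}(q^{4n-2}-q^2-1)$ vanishes, so the bracketed factor collapses to $-q^{4}$, and a short computation gives $S(1)=-q^{-4}=a_0$, as required. For the inductive step I would factor out the common hypergeometric part. Put $D_n=\frac{(q^{-2};q^4)_n^4}{(q^4;q^4)_n^4}$ and set $x=q^{2n}$, so that $S(n)=D_n\,g(x)$ for an explicit rational function $g\in\mathbb{Q}(q)(x)$ assembled from $(q^{2n}+1)^4$, from $[n]_{q^2}^4=\bigl((x-1)/(q^2-1)\bigr)^4$, and from the bracket rewritten via $x^2=q^{4n}$. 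The shift of the product is governed by $D_{n+1}=D_n\bigl(\frac{1-q^{-2}x^2}{1-q^4x^2}\bigr)^4$, and likewise $a_n=D_n\,h(x)$ with $h(x)=\frac{(1-q^{-2}x^4)(1-q^{-1}x^2)^2}{(1-q^2)(1-q)^2}\,x^2$, using $q^{8n-2}=q^{-2}x^4$ and $q^{4n-1}=q^{-1}x^2$. Dividing the desired recurrence by $D_n$ removes all $q$-shifted factorials and reduces the whole statement to the single rational identity
\begin{equation*}
\Bigl(\frac{1-q^{-2}x^2}{1-q^4x^2}\Bigr)^4 g(q^2 x)-g(x)=h(x).
\end{equation*}

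The main obstacle is precisely this last identity: after clearing the denominators $(1-q^4x^2)^4$ and the powers of $(q^2-1)$, it becomes a polynomial identity in $x$ over $\mathbb{Q}(q)$ of fairly high degree, which I would verify by expanding both sides and comparing coefficients of the powers of $x$ (equivalently, by producing the corresponding $q$-Gosper certificate). This step is elementary but lengthy, and the only real care needed is bookkeeping of the many factors of $q$ generated by the substitutions $q^{4n-2}=q^{-2}x^2$, $q^{4n+1}=qx^2$, $q^{8n-2}=q^{-2}x^4$, and so on. Once this polynomial identity is confirmed the induction closes and \eqref{eq:thm2.1} follows; the resulting closed form for the full sum is then exactly what is needed to extract the supercongruence of Theorem \ref{thm3} by reducing $S(n)$ modulo $[n]_{q^2}^4\Phi_n(q^2)$.
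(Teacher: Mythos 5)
Your proposal is correct and is essentially the paper's own proof: the paper also proceeds by induction on $n$, checking the base case $S(1)=-q^{-4}$ and reducing the inductive step $S(n+1)-S(n)=a_n$, after factoring out the common $q$-shifted-factorial part, to the routine polynomial identity $(1-q^{4n})^4f_n(q)+(1-q^{2(4n-1)})(1-q^{4n-1})^2(1-q)(1+q)^3q^{4n}=(1-q^{4n-2})^4f_{n+1}(q)$, which is exactly your certificate identity written in the variable $x=q^{2n}$. Your substitution $x=q^{2n}$ and the Gosper-style framing are only cosmetic differences, and like the paper you leave the final polynomial verification as a finite coefficient check, which is legitimate.
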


Note that \eqref{eq:thm2.1} could be regarded as a $q$-analogue of the following identity:
$$
\sum_{k=0}^{n-1}\frac{(4k-1)^3}{256^k(2k-1)^4}{2k\choose k}^4=\frac{16n^4(8n^2-12n+3){2n\choose n}^4}{256^n(2n-1)^4}.
$$

\begin{proof}
For convenience, let
$$
S(n)=\sum_{k=0}^{n-1}[4k-1]_{q^2}[4k-1]^2\frac{(q^{-2};q^4)_k^4}{(q^4;q^4)_k^4}q^{4k},
$$ and
$$
T(n)=(q^{2n}+1)^4[n]_{q^2}^4\frac{(q^{-2};q^4)^4_{n}}{(q^4;q^4)_{n}^4}f_n(q),
$$
where
$$
f_n(q)=2\cdot\frac{q^5+q^{4n+1}(q^{4n-2}-q^2-1)}{(q^2-1)^2}-q^{4n}.
$$
We proceed by induction on $n$. For $n=1,$ it is clear that
$$
S(1)=-\frac1{q^4}=T(1).
$$
Suppose that the statement is true for $n$. We now consider the $n+1$ case.
Using the induction hypothesis, we have
\begin{align*}
S(n+1)&=S(n)+[4n-1]_{q^2}[4n-1]^2\cdot\frac{(q^{-2};q^4)_n^4}{(q^4;q^4)_n^4}q^{4n}\\[5pt]
&=\frac{(q^{-2};q^4)_n^4}{(q^4;q^4)_n^4}\biggl([2n]_{q^2}^4f_n(q)+[4n-1]_{q^2}[4n-1]^2q^{4n} \biggr)\\[5pt]
&=\frac{(q^{-2};q^4)_{n+1}^4(1-q^{4n+4})^4}{(q^4;q^4)_{n+1}^4(1-q^{4n-2})^4}\biggl([2n]_{q^2}^4f_n(q)
+[4n-1]_{q^2}[4n-1]^2q^{4n}\biggr)\\[5pt]
&=(1+q^{2n+2})^4[n+1]_{q^2}^4\frac{(q^{-2};q^4)_{n+1}^4}{(q^4;q^4)_{n+1}^4(1-q^{4n-2})^4}\\[5pt]
&\quad\times\biggl((1-q^{4n})^4f_n(q)+(1-q^{2(4n-1)})(1-q^{4n-1})^2(1-q)(1+q)^3q^{4n}\biggr)\\[5pt]
&=T(n+1),
\end{align*}
where the last equality holds because of the following relation
\begin{align*}
&(1-q^{4n})^4f_n(q)+(1-q^{2(4n-1)})(1-q^{4n-1})^2(1-q)(1+q)^3q^{4n}\\
&\quad=(1-q^{4n-2})^4f_{n+1}(q).
\end{align*}
This completes the proof of Theorem \ref{thm2}.
\end{proof}

We are now able to prove Theorem \ref{thm3}.

\begin{proof}[Proof of \eqref{eq1.3}]
Replacing $n$ by $(n+3)/2$ in \eqref{eq:thm2.1}, we obtain
\begin{align}
\sum_{k=0}^{(n+1)/2}[4k-1]_{q^2}[4k-1]^2\frac{(q^{-2};q^4)_k^4}{(q^4;q^4)_k^4}q^{4k}
&=(q^{n+3}+1)^4\frac{(1-q^{n+3})^4(q^{-2};q^4)_{(n+3)/2}^4}{(1-q^2)^4(q^4;q^4)_{(n+3)/2}^4}
f_{\frac{n+3}{2}}(q)  \notag\\
&=[n]_{q^2}^4\frac{(q^{-2};q^4)_{(n+1)/2}^4}{(q^4;q^4)_{(n+1)/2}^4}f_{\frac{n+3}2}(q).  \label{eq3.4}
\end{align}
In light of \cite[(4.2)]{Guo-result}, we have
$$
\frac{(q^{-2};q^4)_{(n+1)/2}}{(q^4;q^4)_{(n+1)/2}}\equiv(-1)^{(n+1)/2}q^{(n-1)^2/2-2}\pmod{\Phi_n(q^2)},
$$
and so the right-hand side of \eqref{eq3.4} is congruent to
$$
[n]_{q^2}^4q^{2(n-1)^2-8}\cdot q^5(2q^2-q+2)\equiv [n]_{q^2}^4(2q+2q^{-1}-1)
$$
modulo $[n]_{q^2}^4\Phi(q^2)$. This completes the proof.
\end{proof}

\begin{proof}[Proof of \eqref{eq1.4}]
It is easy to see that
$$
\frac{(q;q^2)_n}{(q^2;q^2)_n}=\frac 1{(-q;q)^2_n}{2n\brack n},
$$
where the \emph{$q$-binomial coefficients} ${n\brack k}$ are defined by
$$
{n\brack k}={n\brack k}_q
=\begin{cases}
\dfrac{(q;q)_n}{(q;q)_k(q;q)_{n-k}} & \mbox{if } 0\leq k\leq n, \\[15pt]
0 & \mbox{otherwise}.
\end{cases}$$
So the identity \eqref{eq:thm2.1} may be restated as
\begin{align}
&\sum_{k=0}^{n-1}[4k-1]_{q^2}[4k-1]^2\frac{(q^{-2};q^4)_k^4}{(q^4;q^4)_k^4}q^{4k} \notag\\
&\quad=\frac{(q^{2n}+1)^4[n]_{q^2}^4(q^2-1)^4}{(q^2-q^{4n})^4(-q^2;q^2)_n^8}{2n\brack n}_{q^2}^4\left(2\cdot\frac{q^5+q^{4n+1}(q^{4n-2}-q^2-1)}{(q^2-1)^2}-q^{4n}\right).
\label{eq3.1}
\end{align}
Since $q^n\equiv 1\pmod{\Phi_n(q)}$,  by \cite[Lemma 3.1]{GW} we have
\begin{equation}\label{eq3.2}
{2n\brack n}_{q^2}=(1+q^{2n}){2n-1\brack n-1}_{q^2}\equiv 2(-1)^{n-1}q^{n(n-1)}\equiv 2\pmod{\Phi_n(q^2)}
\end{equation}
for odd $n>1$. In view of \cite[Lemma 3.2]{GW}, the following $q$-congruence holds:
\begin{equation}\label{eq3.3}
(-q^2;q^2)_n=(1+q^{2n})(-q^2;q^2)_{n-1}\equiv 2\pmod {\Phi_n(q^2)}.
\end{equation}
Applying \eqref{eq3.2} and \eqref{eq3.3}, we see that the right-hand side of \eqref{eq3.1} is congruent to
$$
[n]_{q^2}^4\left(2\cdot\frac{q^5+q(q^{-2}-q^2-1)}{(q^2-1)^2}-1\right)=[n]_{q^2}^4(2q+2q^{-1}-1)
$$
modulo $[n]_{q^2}^4\Phi(q^2)$.  This completes the proof.
\end{proof}

\section{A further generalization of Theorem \ref{thm2}}

By induction on $n$, we can also prove the following parameter generalization of \eqref{eq:thm2.1}.

\noindent \begin{theorem}\label{thm4} Let $n>1$ be an integer. Then
\begin{align}
&\sum_{k=0}^{n-1}[4k-1]_{q^2}[4k-1]^2\frac{(q^{-2};q^4)_k^2(q^{-2}/a;q^4)_k(aq^{-2};q^4)_k}
{(q^4;q^4)_k^2(q^4/a;q^4)_k(aq^4;q^4)_k}q^{4k}  \notag\\
&\quad=q(q^{2n}+1)^2[n]_{q^2}^2{2n\brack n}_{q^2}^2
\frac{(1-q^{-2})^2(q^6/a;q^4)_{n-2}(aq^6;q^4)_{n-2}f_n(a,q)}
{(1-q^{4n-2})^2(-q^2;q^2)_n^4(aq^4;q^4)_{n-1}(q^4/a;q^4)_{n-1}},  \label{eq:thm4}
\end{align}
where
\begin{align*}
f_n(a,q)&=2+2\sum_{i=1}^{4n-6}i(q^i+q^{8n-6-i})
-\frac{(a^2+1)(q^{8n-5}-2q^{4n-1}+2q^{4n-2}-2q^{4n-3}+q)}{a(q-1)^2}
\\&\quad+(8n-10)q^{4n-4}(1+q^2)+(8n-11)q^{4n-5}(1+q^4)+(8n-8)q^{4n-3}+2q^{8n-6}.
\end{align*}
\end{theorem}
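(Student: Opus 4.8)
The plan is to establish \eqref{eq:thm4} by induction on $n$, following closely the strategy of the proof of Theorem \ref{thm2}. Denote by $S_a(n)$ the left-hand side and by $T_a(n)$ the right-hand side of \eqref{eq:thm4}. Because the closed form contains $(q^6/a;q^4)_{n-2}$, the induction will be anchored at $n=2$, where $(q^6/a;q^4)_0=(aq^6;q^4)_0=1$ and $S_a(2)$ is just the sum of the $k=0$ and $k=1$ terms; comparing the two explicit expressions gives $S_a(2)=T_a(2)$ directly. It is worth noting at the outset that every factor in \eqref{eq:thm4} is invariant under $a\mapsto a^{-1}$ (the products $(q^{-2}/a;q^4)_k(aq^{-2};q^4)_k$ and $(q^6/a;q^4)_{n-2}(aq^6;q^4)_{n-2}$ are symmetric, and $f_n(a,q)$ depends on $a$ only through $(a^2+1)/a=a+a^{-1}$), and that setting $a=1$ collapses \eqref{eq:thm4} to \eqref{eq:thm2.1}; these two features will serve both as guides and as consistency checks.

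For the inductive step I would assume $S_a(n)=T_a(n)$ and write
$$
S_a(n+1)=S_a(n)+[4n-1]_{q^2}[4n-1]^2\frac{(q^{-2};q^4)_n^2(q^{-2}/a;q^4)_n(aq^{-2};q^4)_n}{(q^4;q^4)_n^2(q^4/a;q^4)_n(aq^4;q^4)_n}q^{4n}.
$$
Substituting the induction hypothesis for $S_a(n)$, the next step is to bring $T_a(n)$ and the appended $k=n$ summand over one common quotient of $q$-shifted factorials. This uses the one-step recurrence $(x;q^4)_{m+1}=(1-xq^{4m})(x;q^4)_m$ for each Pochhammer symbol present, together with the corresponding recurrences for $(-q^2;q^2)_n$ and for the central $q$-binomial coefficient ${2n\brack n}_{q^2}$ — the same manipulations already invoked in \eqref{eq3.2} and \eqref{eq3.3}. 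After extracting the maximal common factor, the ratios between the index-$n$ and index-$(n+1)$ Pochhammer products collapse to a handful of explicit factors that are linear in $a^{\pm1}$ and polynomial in $q$, and the whole equation reduces to a single relation of the shape $(\text{explicit})\cdot f_{n+1}(a,q)=(\text{explicit})\cdot f_n(a,q)+(\text{explicit correction})$, the exact analogue of $(1-q^{4n})^4f_n(q)+\cdots=(1-q^{4n-2})^4f_{n+1}(q)$ in Theorem \ref{thm2}.

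The crux, and the only real obstacle, is verifying this reduced recurrence for $f_n(a,q)$. Two complications distinguish it from the scalar case. First, $f_n(a,q)$ carries the running sum $2\sum_{i=1}^{4n-6}i(q^i+q^{8n-6-i})$, whose index range and exponents shift when $n$ increases; forming $f_{n+1}(a,q)-f_n(a,q)$ makes this sum telescope, and I would isolate the resulting boundary terms and fold them into the explicit lower-order pieces $(8n-10)q^{4n-4}(1+q^2)$, $(8n-11)q^{4n-5}(1+q^4)$, and the like. Second, the $a$-dependence must be matched: since $f_n$ and $f_{n+1}$ are affine in $s=a+a^{-1}$ while the common explicit factors are of bounded degree in $s$, the recurrence splits, after collecting powers of $s$, into finitely many identities that are polynomial in $q^{\pm1}$ with $n$ entering only through the exponents. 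Each of these is then confirmed by comparing coefficients of like powers of $q$, treating $n$ as a formal parameter. I expect the coefficient bookkeeping for the telescoped sum to be the most delicate part, but it is entirely mechanical and presents no conceptual difficulty; the specialization $a=1$, which must reproduce the relation used for Theorem \ref{thm2}, provides a convenient final check.
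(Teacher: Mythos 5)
Your proposal is correct and follows essentially the same route as the paper, which proves Theorem \ref{thm4} exactly as you describe: by induction on $n$ in the manner of Theorem \ref{thm2}, reducing the inductive step via the one-step Pochhammer recurrences to an explicit polynomial relation between $f_{n+1}(a,q)$ and $f_n(a,q)$ (the paper in fact only states ``by induction on $n$'' and omits the details you sketch, such as anchoring at $n=2$, telescoping the running sum, and separating powers of $a+a^{-1}$).
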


We end this paper with the following generalization of \cite[Theorem 4.3]{Guo-result}.
\begin{theorem}\label{thm5} Let $n>1$ be an odd integer and $a$ an indeterminate. Then, modulo $[n]_{q^2}^2(1-aq^{2n})(a-q^{2n})$,
\begin{equation}\label{eq:thm5}
\sum_{k=0}^{M}[4k-1]_{q^2}[4k-1]^2\frac{(q^{-2};q^4)_k^2(q^{-2}/a;q^4)_k(aq^{-2};q^4)_k}
{(q^4;q^4)_k^2(q^4/a;q^4)_k(aq^4;q^4)_k}q^{4k}\equiv 0,
\end{equation}
 where $M=(n+1)/2$ or $n-1$.
\end{theorem}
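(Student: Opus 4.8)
The plan is to read off both congruences directly from the closed-form evaluation of Theorem \ref{thm4}, exactly as \eqref{eq1.3} was obtained from \eqref{eq:thm2.1}. Since the three factors $[n]_{q^2}^2$, $1-aq^{2n}$ and $a-q^{2n}$ of the modulus are pairwise coprime in $\mathbb{Z}[q^{\pm1},a^{\pm1}]$, it suffices to exhibit each of them (with the correct multiplicity) in the evaluated right-hand side of \eqref{eq:thm4}, and to check that the denominator occurring there is coprime to the modulus.

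For $M=n-1$ this is immediate. The right-hand side of \eqref{eq:thm4} already displays the factor $[n]_{q^2}^2$, while the numerator factors $(aq^6;q^4)_{n-2}$ and $(q^6/a;q^4)_{n-2}$ contain $1-aq^{2n}$ and $1-q^{2n}/a=a^{-1}(a-q^{2n})$ respectively, the relevant product index being $j=(n-3)/2$, an integer because $n$ is odd. It then remains to see that the denominator $(1-q^{4n-2})^2(-q^2;q^2)_n^4(aq^4;q^4)_{n-1}(q^4/a;q^4)_{n-1}$ meets none of these factors: $\Phi_d(q^2)\mid(1-q^{2m})$ forces $d\mid m$, so $(1-q^{4n-2})^2$ (with $m=2n-1$) is coprime to $[n]_{q^2}$; each $1+q^{2j}$ in $(-q^2;q^2)_n$ is coprime to $\Phi_d(q^2)$ for odd $d$; and $(aq^4;q^4)_{n-1}$, $(q^4/a;q^4)_{n-1}$ cannot produce $1\mp aq^{2n}$ since $4j=2n$ is impossible for odd $n$. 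This yields \eqref{eq1.4}.

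For $M=(n+1)/2$ I would first replace $n$ by $N:=(n+3)/2$ in \eqref{eq:thm4}, so that its left-hand side becomes precisely the sum truncated at $(n+1)/2$. The $a$-linear factors are extracted as before from $(aq^6;q^4)_{N-2}=(aq^6;q^4)_{(n-1)/2}$ and $(q^6/a;q^4)_{(n-1)/2}$, the critical index $j=(n-3)/2$ now being the last one in range. The factor $[n]_{q^2}^2$, on the other hand, is no longer manifest and must be recovered from the two squares $[N]_{q^2}^2$ and ${2N\brack N}_{q^2}^2$. Here I would use that $\Phi_d(q^2)$ divides ${2N\brack N}_{q^2}$ to the order $\lfloor 2N/d\rfloor-2\lfloor N/d\rfloor\in\{0,1\}$, this being $1$ exactly when $N\bmod d\ge(d+1)/2$, together with $\Phi_d(q^2)\mid[N]_{q^2}$ iff $d\mid N$. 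For a divisor $d>1$ of $n$ one has $2N=n+3\equiv3\pmod d$, hence $N\equiv(d+3)/2\pmod d$; thus for $d\ge5$ the residue $(d+3)/2$ exceeds $(d+1)/2$ and the binomial square already contributes $\Phi_d(q^2)^2$, while for $d=3$ one instead has $3\mid N$ and the factor $[N]_{q^2}^2$ supplies it. A final check, identical in spirit to the $M=n-1$ case, shows the denominator (now with upper index $N-1=(n+1)/2$) contributes no $\Phi_d(q^2)$ and does not absorb the two $a$-factors, since $4N-2=2(n+2)$ and $d\mid n$, $d\mid(n+2)$ force $d=1$.

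The decisive step, and the one I expect to require the most care, is this cyclotomic bookkeeping for $M=(n+1)/2$: because $[n]_{q^2}^2$ is not a visible factor after the substitution, one must track the $\Phi_d(q^2)$-adic valuation of the central $q$-binomial square and of $[N]_{q^2}^2$ simultaneously over all divisors $d\mid n$, treating the residue class $d=3$ separately from $d\ge5$, and must rule out any cancellation coming from $(1-q^{4N-2})^2$ and $(-q^2;q^2)_N^4$ in the denominator. By contrast, the extraction of the $a$-linear factors and the remaining coprimality verifications are routine.
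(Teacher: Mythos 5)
Your proposal is correct, and for the $M=n-1$ case it coincides with the paper's argument: both read the congruence off Theorem \ref{thm4} directly, extract $1-aq^{2n}$ and $a^{-1}(a-q^{2n})$ from $(aq^6;q^4)_{n-2}$ and $(q^6/a;q^4)_{n-2}$ at the index $j=(n-3)/2$, and check that the denominator of \eqref{eq:thm4} is coprime to the modulus (the paper compresses this into the two facts $\gcd(1-q^n,1+q^m)=1$ and $\gcd([n],[2n-1])=1$). Where you genuinely diverge is the recovery of $[n]_{q^2}^2$ for $M=(n+1)/2$. The paper does no cyclotomic valuation counting at all: after the substitution $n\mapsto (n+3)/2$ it invokes the exact algebraic identity
\begin{equation*}
\frac{1}{(1-q^{2n+4})^2}\biggl[\frac{n+3}{2}\biggr]_{q^2}^2{n+3\brack \frac{n+3}{2}}_{q^2}^2
=[n]_{q^2}^2{n-1\brack \frac{n-1}{2}}_{q^2}^2(1+q^{n+3})^2\frac{(1+q^{n+1})^2}{(1-q^{n+1})^2},
\end{equation*}
which exhibits $[n]_{q^2}^2$ times the polynomial ${n-1\brack (n-1)/2}_{q^2}^2$ in one stroke; the only new denominator, $(1-q^{n+1})^2$, is coprime to $[n]_{q^2}$ since $\gcd(2n,n+1)=2$. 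Your alternative --- the carry formula $\lfloor 2N/d\rfloor-2\lfloor N/d\rfloor$ for the $\Phi_d(q^2)$-adic order of ${2N\brack N}_{q^2}$, with $2N\equiv 3\pmod d$ forcing $N\equiv (d+3)/2\pmod d$, hence order $1$ for each divisor $d\ge 5$ of $n$, the case $d=3$ covered instead by $3\mid N$ and $[N]_{q^2}^2$, together with your checks that $(1-q^{2(n+2)})^2$, $(-q^2;q^2)_N^4$ and the $a$-Pochhammers in the denominator cancel nothing --- is valid; I verified the residue computation and the threshold $N\bmod d\ge (d+1)/2$. The trade-off is clear: the paper's identity is an exact factorization requiring no case analysis over divisors of $n$, whereas your valuation count is the more mechanical, standard technique and would survive unchanged even if the closed form factored less cleanly. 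Two minor points: your ``this yields \eqref{eq1.4}'' should say ``this yields the $M=n-1$ case of \eqref{eq:thm5}'' (specializing $a$ in \eqref{eq:thm5} does not recover \eqref{eq1.4}, whose modulus contains $\Phi_n(q^2)$ to a higher power and whose right-hand side is nonzero); and since $\Phi_d(q^2)=\Phi_d(q)\Phi_{2d}(q)$ for odd $d>1$, your valuations are formally taken at each irreducible factor separately --- all your divisibility criteria hold verbatim for both factors, so nothing breaks.
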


\begin{proof}
Let $n\mapsto (n+3)/2$ in \eqref{eq:thm4}, and notice the following realtion
\begin{align*}
\frac{1}{(1-q^{2n+4})^2}\biggl[\frac{n+3}2\biggr]_{q^2}^2{n+3\brack \frac{n+3}{2}}_{q^2}^2
=[n]_{q^2}^2{n-1\brack\frac{n-1}{2} }_{q^2}^2(1+q^{n+3})^2\frac{(1+q^{n+1})^2}{(1-q^{n+1})^2}.
\end{align*}
Since
\begin{equation*}
\gcd(1-q^n,1+q^m)=1\quad\text{and}\quad\gcd([n],[2n-1])=1
\end{equation*}
for all positive integers $m$ and $n$ with $n$ odd, we conclude that
the $q$-congruence \eqref{eq:thm5} holds for $M=n-1$ and $(n+1)/2$.





\end{proof}

\end{document}